\documentclass{amsart}
\usepackage{amsmath, amsthm, amssymb, amscd, mathrsfs, eucal, epsfig, multirow, longtable, appendix}
\usepackage{pinlabel}

\DeclareMathOperator{\GL}{\mathrm{GL}}

\input xy
\xyoption{all}

\usepackage{hyperref}

\begin{document}

\newtheorem{theorem}{Theorem}
\newtheorem{lemma}[theorem]{Lemma}
\newtheorem{corollary}[theorem]{Corollary}
\newtheorem{conjecture}[theorem]{Conjecture}
\newtheorem{proposition}[theorem]{Proposition}
\newtheorem{question}[theorem]{Question}
\newtheorem{problem}[theorem]{Problem}
\newtheorem*{PL_LO_thm}{PL is LO~\ref{thm:PL_LO}}
\newtheorem*{claim}{Claim}
\newtheorem*{criterion}{Criterion}
\theoremstyle{definition}
\newtheorem{definition}[theorem]{Definition}
\newtheorem{construction}[theorem]{Construction}
\newtheorem{notation}[theorem]{Notation}
\newtheorem{convention}[theorem]{Convention}
\newtheorem*{warning}{Warning}

\theoremstyle{remark}
\newtheorem{remark}[theorem]{Remark}
\newtheorem{example}{Example}
\newtheorem*{case}{Case}

\def\C{\mathbb C}
\def\F{\mathfrak F}
\def\H{\mathbb H}
\def\R{\mathbb R}
\def\Z{\mathbb Z}
\def\RP{\mathbb{RP}}
\def\PL{\textnormal{PL}}
\def\GL{\textnormal{GL}}
\def\Homeo{\textnormal{Homeo}}
\def\a{{\alpha}}
\def\b{{\beta}}
\def\s{{\sigma}}
\def\G{{\Gamma}}
\def\Homeo{\textnormal{Homeo}}
\def\Diff{\textnormal{Diff}}
\def\fix{\textnormal{fix}}
\def\fro{\textnormal{fro}}
\def\Id{\textnormal{Id}}

\title{Generalized torsion in knot groups}

\author{Geoff Naylor}
\author{Dale Rolfsen}
%\address{Department of Mathematics \\ University of British Columbia \\ Vancouver \\ Canada}
%\email{rolfsen@math.ubc.ca}

\date{\today}
\begin{abstract}  We show that for many classical knots one can find generalized torsion in the fundamental group of its complement, commonly called the knot group.  It follows that such a group is not bi-orderable.  Examples include all torus knots, the (hyperbolic) knot $5_2$ and satellites of these knots.

\end{abstract}
\maketitle

\section{Introduction}
The purpose of this note is to initiate a study of generalized torsion in classical knot groups.  For two elements $x$ and $y$ of a group, we use the notation $x^y := y^{-1}xy$ for the conjugate of $x$ by $y$ and $[x, y] := x^{-1}y^{-1}xy$ for their commutator.  The identity of a group is denoted by $1$.
A {\em generalized torsion} element of a group is an element $x \ne 1$ for which some (nonempty finite) product of conjugates of that element is the identity: $x^{y_1}x^{y_2}\cdots x^{y_k} = 1.$ 

\begin{example}\label{klein}
An example of generalized torsion is the fundamental group of the Klein bottle 
$\langle x, y | y^{-1}xy = x^{-1} \rangle$ in which $x$ is a generalized torsion element: $x^yx= 1.$ 
\end{example}

\section{Knot groups and bi-ordering}

A classical knot is a subset $K$ of euclidean space $\R^3$ which is abstractly homeomorphic to the circle $S^1$.  We assume $K$ is smooth or piecewise linear.
The {\em knot group} of $K$ is the fundamental group of its complement: $\pi_1(\R^3 \setminus K)$.  It has long been known that knot groups do not contain torsion, that is, elements of finite order.  In fact, knot groups have the stronger property of being locally indicable: every nontrivial finitely-generated subgroup surjects to $\Z$, the integers (see \cite{HS} and \cite{BRW}).  It follows that knot groups support left-invariant orderings, meaning that the elements of the group can be given a total ordering $<$ such that for elements $x,y, z$ of the group one has $y < z
\iff xy < xz$.  The groups of some knots, for example $4_1$, are bi-orderable: there is a strict total ordering of the elements which is invariant under multiplication on both sides (see \cite{PR}).  But not all knot groups are bi-orderable; generalized torsion is a well-known obstruction to bi-orderability.  The following is clear, for if we assume without loss that $x>1$ in a bi-ordering, then all its conjugates are also greater than the identity, so 
$x^{y_1}x^{y_2}\cdots x^{y_k}  > 1$.

\begin{proposition}\label{bi-ordNoGenTors}
Bi-orderable groups do not have generalized torsion elements.
\end{proposition}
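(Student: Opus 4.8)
The plan is to argue by contradiction, tracking the sign of elements relative to the identity. Suppose $G$ carries a bi-ordering $<$ and that $x \in G$ is a generalized torsion element, so that $x \neq 1$ and $x^{y_1}x^{y_2}\cdots x^{y_k} = 1$ for some $y_1,\dots,y_k \in G$ with $k \geq 1$. By trichotomy either $x > 1$ or $x < 1$; I would carry out the case $x > 1$ and observe that the case $x < 1$ is symmetric (equivalently, one may replace $x$ by $x^{-1}$, since the product of the $x^{-1}$-conjugates is then also $1$).

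The first key step is to show that the positive cone $P = \{\, g \in G : g > 1 \,\}$ is closed under conjugation. Starting from $g > 1$, left-multiplication by $y^{-1}$ gives $y^{-1}g > y^{-1}$, and right-multiplication by $y$ then gives $y^{-1}gy > y^{-1}y = 1$. Here I invoke two-sided invariance, which is precisely the extra hypothesis that a bi-ordering supplies beyond a mere left-ordering. Hence $g^y = y^{-1}gy > 1$, so every conjugate of a positive element is again positive.

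The second key step is that $P$ is closed under multiplication: if $a > 1$ and $b > 1$, then right-multiplying $a > 1$ by $b$ yields $ab > b$, and chaining this with $b > 1$ through transitivity gives $ab > 1$. Combining the two steps, each factor $x^{y_i}$ lies in $P$, and therefore so does the product, forcing $x^{y_1}x^{y_2}\cdots x^{y_k} > 1$. This contradicts the assumed equation $x^{y_1}x^{y_2}\cdots x^{y_k} = 1$, completing the argument.

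I do not anticipate any genuine obstacle: the entire proof is an elementary sign computation, and what passes for the crux is merely making explicit that two-sided invariance is what keeps conjugates on the same side of $1$. Left-invariance alone would not guarantee this, which is exactly the reason generalized torsion obstructs bi-orderability while being compatible with left-orderability—consistent with the remark in the introduction that knot groups are always left-orderable yet need not be bi-orderable.
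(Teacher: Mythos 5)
Your proof is correct and is essentially the paper's own argument, just written out in full: the paper likewise assumes without loss of generality that $x>1$, notes that bi-invariance forces all conjugates of $x$ to be positive, and concludes that the product $x^{y_1}x^{y_2}\cdots x^{y_k}$ is greater than $1$, a contradiction. Your explicit verification that the positive cone is closed under conjugation and multiplication is exactly what the paper leaves implicit.
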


Although left-orderable groups cannot have torsion, they can have generalized torsion -- the Klein bottle group of Example \ref{klein} is an instance: in fact it has exactly four left-orderings.  As we'll see, many knot groups are also examples.

Among the various reasons bi-orderability of a knot group is of interest is the following result of \cite{CR}.

\begin{theorem}
If the group of the knot $K$ is bi-orderable, then surgery on $K$ cannot produce a 3-manifold with finite fundamental group, or more generally any L-space in the sense of Ozsv{\'a}th and Szab{\'o} \cite{OS}.
\end{theorem}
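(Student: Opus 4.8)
The plan is to translate the surgical statement into a question about quotients of the bi-orderable knot group $G=\pi_1(\R^3\setminus K)$ and then to exploit the fact, recorded in Proposition~\ref{bi-ordNoGenTors}, that $G$ has no generalized torsion. Writing $\mu,\lambda$ for a meridian--longitude basis of the peripheral $\Z\oplus\Z$, the manifold $M_{p/q}$ obtained by $p/q$ Dehn surgery satisfies $\pi_1(M_{p/q})\cong G/\langle\langle \mu^{p}\lambda^{q}\rangle\rangle$, where $\langle\langle\,\cdot\,\rangle\rangle$ denotes normal closure; moreover $M_{p/q}$ is a rational homology sphere exactly when $p\ne 0$, and the case $p=0$ is automatic (it has $H_1$ infinite, so the fundamental group is infinite and the manifold is not an L-space). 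Thus both conclusions reduce to understanding how bi-orderability of $G$ descends to the surgery quotients.

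The central lemma I would isolate is that, for $c=\mu^{p}\lambda^{q}$, the quotient $G/\langle\langle c\rangle\rangle$ is left-orderable. Here is the mechanism I would use: fix a bi-ordering $<$ of $G$ and, replacing $c$ by $c^{-1}$ if necessary (which does not change the normal closure), assume $c>1$. Every conjugate of $c$ is then positive, and since $G$ is bi-orderable no nonempty product of conjugates of $c$ can equal $1$ --- this is precisely the no-generalized-torsion conclusion of Proposition~\ref{bi-ordNoGenTors}. I would use this to build a left-ordering on the quotient from the positive cone of $<$. The delicate point, and the step I expect to require the most care, is that the normal closure contains not only positive conjugates of $c$ but also mixed products such as $c^{x}(c^{y})^{-1}$, so one must check directly that passing to cosets yields a genuine (total, antisymmetric) left-ordering rather than merely a left-invariant preorder. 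Granting this lemma, the finite-$\pi_1$ case is immediate: a left-orderable group is torsion-free, so a nontrivial finite $\pi_1(M_{p/q})$ is impossible, while the trivial group is excluded for nontrivial $K$ by the knot-complement theorem of Gordon and Luecke, since only the unknot admits a nontrivial surgery yielding $S^{3}$.

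For the L-space conclusion I would argue that $M_{p/q}$, being a rational homology sphere with infinite, left-orderable fundamental group, cannot be an L-space. This is where the main obstacle lies, because the implication ``L-space $\Rightarrow$ non-left-orderable $\pi_1$'' is the subtle direction of the Boyer--Gordon--Watson correspondence and is not available in full generality. I would therefore take one of two routes: either restrict to the settings in which that implication is a theorem, or, when $K$ is fibered, bypass Heegaard Floer homology by combining the fact that L-space knots are fibered with the Perron--Rolfsen criterion~\cite{PR}, which constrains the roots of the Alexander polynomial of a bi-orderable fibered knot and can then be compared against the Alexander polynomials realized by L-space knots (for instance the torus knots, whose polynomials have roots only on the unit circle). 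Verifying that one of these bridges covers every relevant surgery slope is the crux of the argument and the part I would expect to occupy the bulk of the work.
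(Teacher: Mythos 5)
First, a point of reference: the paper does not prove this theorem at all --- it is imported verbatim from \cite{CR}, where the argument runs through Heegaard Floer theory and Alexander polynomials, not through orderability of the surgered manifolds. Your proposal instead rests on a ``central lemma'' that is false: a quotient of a bi-orderable group by the normal closure of a single element need not be left-orderable. The free group $F_2=\langle x,y\rangle$ is bi-orderable, yet $F_2/\langle\langle x^2\rangle\rangle\cong (\Z/2\Z)*\Z$ has torsion; worse, the lemma fails in exactly your setting, since the unknot group $\Z$ is bi-orderable while $\Z/\langle\langle \mu^p\lambda^q\rangle\rangle\cong\Z/p\Z$ is the fundamental group of a lens space obtained by surgery. (This also shows that nontriviality of $K$ must enter the argument somewhere; in your lemma it never does.) The point you flag as ``delicate'' is precisely where the argument collapses: Proposition~\ref{bi-ordNoGenTors} guarantees only that the sub-semigroup of products of conjugates of $c$ misses $1$, but the normal closure $\langle\langle c\rangle\rangle$ is far larger, and the image of the positive cone of $G$ in the quotient is in general not a positive cone. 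No amount of care with mixed products $c^{x}(c^{y})^{-1}$ can repair this, because the conclusion itself is false.

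Your second difficulty is also real: the implication ``L-space $\Rightarrow$ non-left-orderable $\pi_1$'' is in general only the Boyer--Gordon--Watson \emph{conjecture} (a theorem for Seifert fibered and graph manifolds, open for hyperbolic fillings), so no left-orderability argument can cover all slopes. Your ``route 2,'' however, is essentially the actual proof in \cite{CR}, with one crucial miscitation and one unnecessary restriction. The correct chain is: (i) an L-space surgery makes $K$ an L-space knot, hence fibered by Ni's theorem --- so fiberedness is a consequence, not a hypothesis, and the worry about covering every slope dissolves; (ii) by Ozsv\'ath--Szab\'o \cite{OS}, the Alexander polynomial of an L-space knot has alternating coefficients $\pm 1$, and an elementary estimate shows such a polynomial has no positive real roots; (iii) by the main theorem of \cite{CR} --- \emph{not} the Perron--Rolfsen criterion of \cite{PR} --- a nontrivial fibered knot with bi-orderable group has an Alexander polynomial with at least one positive real root; contradiction. (The finite $\pi_1$ case is subsumed, since spherical manifolds are L-spaces.) The result you cite from \cite{PR} is the converse implication (all roots real and positive $\Rightarrow$ bi-orderable) and yields nothing here; the necessary-condition direction is exactly the new content of \cite{CR}, proved via their theorem that an automorphism preserving a bi-ordering must have a positive real eigenvalue on homology, and without it your route 2 does not close.
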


\section{Commutators as generalized torsion}

\begin{example}\label{ex:trefoil}
Consider the group of the trefoil knot $G_{2,3} = \langle x, y : x^2 = y^3 \rangle$.  The commutator 
$[x ,y]$ is a nontrivial element, because $G$ is non-abelian.  But 
$x^{-1}[x, y]x[x ,y] = x^{-2}y^{-1}xyxx^{-1}y^{-1}xy = x^{-2}y^{-1}x^2y = y^{-3}y^{-1}y^3y = 1.$  So $[x ,y]$ is a generalized torsion element in the knot group of the trefoil.  This is expanded in Theorem \ref{torus group} below for other torus knots -- that is, knots that can be inscribed on the surface of an unknotted solid torus in $\R^3$.
\end{example}

The following lemma is a standard fact of group theory, easily proved by induction using the identities
$[x^n,y] = [x^{n-1},y]^x[x,y]$ and $[x,y^n] = [x,y][x,y^{n-1}]^y$.  Its relation to orderability is pointed out in  \cite{CGW}, Examples 2.1 and 2.2, which also inspired the remaining results of this section.

\begin{lemma}\label{lemma: commutator}
In any group, for all positive integers $p, q$ the commutator $[x^p, y^q]$ is a product of conjugates of 
$[x, y]$.
\end{lemma}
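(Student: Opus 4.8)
The plan is to prove the claim by induction, using the two commutator identities the authors have already supplied as the inductive engine. I would first dispose of the base cases and then do a double induction — one on $p$ and one on $q$ — reducing the exponents one at a time until I reach $[x,y]$ itself. The key observation making this work is that a conjugate of a product of conjugates of $[x,y]$ is again a product of conjugates of $[x,y]$, since $(a_1\cdots a_m)^g = a_1^g\cdots a_m^g$; this closure property is what lets the identities propagate.

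Concretely, I would first fix $q=1$ and induct on $p$ to show that each $[x^p,y]$ is a product of conjugates of $[x,y]$. The base case $p=1$ is trivial. For the inductive step, the identity $[x^p,y] = [x^{p-1},y]^x\,[x,y]$ expresses $[x^p,y]$ as the product of a conjugate of $[x^{p-1},y]$ — which by the inductive hypothesis and the closure remark is a product of conjugates of $[x,y]$ — followed by one more copy of $[x,y]$. Hence $[x^p,y]$ has the desired form for all positive $p$.

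With the case $q=1$ established for every $p$, I would then fix $p$ arbitrary and induct on $q$, using the second identity $[x^p,y^q] = [x^p,y]\,[x^p,y^{q-1}]^y$. The base case $q=1$ is exactly what the first induction provided. In the step, the first factor $[x^p,y]$ is already a product of conjugates of $[x,y]$, and $[x^p,y^{q-1}]^y$ is a conjugate of something that, by the inductive hypothesis, is such a product — hence again of the required form by the closure remark. Concatenating the two finishes the step.

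I do not anticipate a genuine obstacle here, since the statement is explicitly flagged as standard and the two needed identities are handed to us; the only point requiring care is the bookkeeping around conjugation, namely making sure to record that conjugating a product of conjugates of $[x,y]$ preserves the property. I would state that closure observation once, early, and then invoke it silently in both inductive steps so the argument reads cleanly rather than re-deriving it each time.
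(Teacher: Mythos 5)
Your proof is correct and is exactly the argument the paper intends: the paper itself only sketches the lemma as ``easily proved by induction'' from the two identities $[x^n,y]=[x^{n-1},y]^x[x,y]$ and $[x,y^n]=[x,y][x,y^{n-1}]^y$, and your double induction (first on $p$ with $q=1$, then on $q$), together with the closure observation that conjugating a product of conjugates of $[x,y]$ yields another such product, is the standard way to fill in that sketch. No gaps.
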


\begin{proposition}\label{proposition: commuting}  Suppose $G$ is any group containing elements $x$ and $y$ which do not commute, but for which some positive powers $x^p$ and $y^q$ do commute.  Then the commutator $[x, y]$ is a generalized torsion element of $G$.
\end{proposition}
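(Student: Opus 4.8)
The plan is to produce an explicit nonempty product of conjugates of $[x,y]$ that equals the identity, using the hypothesis that $x^p$ commutes with $y^q$ together with Lemma~\ref{lemma: commutator}. Since $x$ and $y$ do not commute, $[x,y] \ne 1$, so it suffices to exhibit such a product to establish that $[x,y]$ is generalized torsion.

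First I would observe that the commuting hypothesis $x^p y^q = y^q x^p$ says precisely that the commutator $[x^p, y^q]$ equals $1$. By Lemma~\ref{lemma: commutator}, this same element $[x^p,y^q]$ can be written as a product of conjugates of $[x,y]$. Chaining these two facts together gives a product of conjugates of $[x,y]$ that equals the identity, which is exactly the defining condition for generalized torsion. The only point requiring care is that the product supplied by the lemma must be genuinely nonempty; this is automatic here because the inductive expansion of $[x^p,y^q]$ for positive $p,q$ involves at least one factor (indeed $p q$ factors), so the product is a nonempty finite product of conjugates.

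The step I expect to be the main obstacle is verifying that nontriviality of $[x,y]$ and nontriviality of the product are compatible with the conclusion, i.e.\ ensuring we have not accidentally written $1$ as an empty product. I would address this by pointing to the explicit form of the expansion in Lemma~\ref{lemma: commutator}: iterating $[x^p,y^q]=[x^{p-1},y^q]^x[x,y^q]$ and $[x,y^q]=[x,y][x,y^{q-1}]^y$ down to the base cases produces exactly $pq$ conjugate factors, each a conjugate of $[x,y]$, and $pq \ge 1$. Hence the product is nonempty, equals $[x^p,y^q]=1$, and each factor is a conjugate of the nontrivial element $[x,y]$, completing the argument.
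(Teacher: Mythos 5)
Your proposal is correct and follows exactly the paper's argument: invoke Lemma~\ref{lemma: commutator} to express $[x^p,y^q]$ as a product of conjugates of $[x,y]$, note that this product is $1$ by the commuting hypothesis while $[x,y]\ne 1$ since $x$ and $y$ do not commute. Your extra care in verifying that the product is nonempty (it has $pq \ge 1$ factors from the inductive expansion) is a worthwhile detail the paper leaves implicit, but it does not change the route.
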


\begin{proof}
By Lemma \ref{lemma: commutator}, $[x^p, y^q]$ is a product of conjugates of $[x, y]$.  But $[x, y] \ne 1$ and $[x^p, y^q] = 1$.
\end{proof}

\begin{theorem}\label{torus group}
For any integers $p$ and $q$ with $|p| > 1$ and $|q| > 1$, the group 
$$G = G_{p,q} = \langle x, y : x^p = y^q \rangle$$ 
contains a generalized torsion element, namely the commutator $[x, y].$
\end{theorem}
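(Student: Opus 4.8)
The plan is to reduce everything to Proposition \ref{proposition: commuting}: it suffices to exhibit the two generators $x$ and $y$ of $G_{p,q}$ as non-commuting elements, some of whose positive powers do commute. The commuting-powers condition will be immediate from the defining relation, so the only real content will be verifying that $x$ and $y$ genuinely fail to commute.

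First I would record that the element $z := x^p = y^q$ is central in $G$: it is a power of $x$, so it commutes with $x$, and it is a power of $y$, so it commutes with $y$; since $x$ and $y$ generate, $z$ is central. Consequently the positive powers $x^{|p|}$ and $y^{|q|}$ are each equal to $z$ or $z^{-1}$ (according to the signs of $p$ and $q$), hence are central, and in particular commute with one another. This supplies the ``commuting positive powers'' hypothesis of Proposition \ref{proposition: commuting}, with exponents $|p|$ and $|q|$, both of which exceed $1$.

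The heart of the argument, and the step I expect to be the main obstacle, is showing $[x,y] \neq 1$, that is, that $G$ is non-abelian. The cleanest route is to pass to the central quotient $\bar{G} := G / \langle z \rangle$. Imposing $z = 1$ turns the relation $x^p = y^q$ into $x^{|p|} = 1$ and $y^{|q|} = 1$, so $\bar{G}$ has presentation $\langle x, y : x^{|p|} = 1,\ y^{|q|} = 1 \rangle$, which is precisely the free product $\mathbb{Z}/|p| * \mathbb{Z}/|q|$. Because $|p| > 1$ and $|q| > 1$, both cyclic factors are nontrivial, and in a free product of two nontrivial groups elements drawn from distinct factors never commute: the reduced words $\bar{x}\,\bar{y}$ and $\bar{y}\,\bar{x}$ are distinct by the normal form theorem. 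Hence $\bar{x}$ and $\bar{y}$ do not commute in $\bar{G}$, and therefore $x$ and $y$ cannot commute in $G$.

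With both hypotheses of Proposition \ref{proposition: commuting} now in hand, namely $x, y$ non-commuting and the positive powers $x^{|p|}, y^{|q|}$ commuting, the proposition immediately yields that $[x,y]$ is a generalized torsion element of $G$, as claimed. An alternative to the central quotient would be to invoke the amalgamated free product decomposition $G \cong \langle x\rangle *_{\langle x^p\rangle = \langle y^q\rangle} \langle y\rangle$ and appeal to its normal form theorem directly, but the passage to $\bar{G}$ keeps the non-commutativity check entirely elementary.
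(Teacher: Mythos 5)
Your proof is correct and takes essentially the same approach as the paper: both arguments reduce the theorem to Proposition~\ref{proposition: commuting} by checking that $x$ and $y$ fail to commute while suitable positive powers of them do commute. The only difference is one of detail: the paper simply asserts that the conditions on $p$ and $q$ force $G$ to be nonabelian, whereas you justify this via the central quotient $\Z/|p| * \Z/|q|$ and its normal form theorem, and you also handle negative $p$ or $q$ explicitly by passing to $|p|$ and $|q|$.
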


\begin{proof}
The conditions on $p$ and $q$ imply $G$ is nonabelian, so $[x, y] \ne 1$. Proposition 
\ref{proposition: commuting} completes the proof.
\end{proof}

If $p$ and $q$ are relatively prime integers, $G_{p,q}$ is the group of the $p,q$-torus knot (two examples pictured below).

\begin{corollary}
The group of a nontrivial torus knot has generalized torsion.
\end{corollary}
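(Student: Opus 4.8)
The plan is to reduce the corollary directly to Theorem \ref{torus group} by recalling the standard classification of which torus knots are nontrivial. First I would fix notation: a torus knot is specified by a pair of integers $(p,q)$, and for the associated curve on the torus to be a single knot rather than a link of several components one needs $\gcd(p,q) = 1$. With this coprimality in force, the group of the $(p,q)$-torus knot is exactly $G_{p,q} = \langle x, y : x^p = y^q\rangle$, as recorded in the sentence immediately preceding the corollary.

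Next I would invoke the classification of torus knots up to triviality. The $(p,q)$-torus knot is the unknot precisely when $|p| \le 1$ or $|q| \le 1$; equivalently, it is nontrivial if and only if both $|p| > 1$ and $|q| > 1$. I would state this as the single external fact on which the whole argument rests. It is standard, and one clean justification is via genus: the genus of $T(p,q)$ equals $(|p|-1)(|q|-1)/2$, which is positive exactly when $|p| > 1$ and $|q| > 1$, and a knot is nontrivial if and only if its genus is positive.

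With these two points in hand the conclusion is immediate. Given a nontrivial torus knot, write it as $T(p,q)$ with $\gcd(p,q) = 1$ and $|p| > 1$, $|q| > 1$. Its knot group is $G_{p,q}$, and the inequalities $|p| > 1$, $|q| > 1$ are exactly the hypotheses of Theorem \ref{torus group}; hence the commutator $[x, y]$ is a generalized torsion element of the knot group. The only step requiring input beyond the theorem just proved is the characterization of nontrivial torus knots, so that is the place where I would be most careful to cite or justify the claim; everything else is a purely formal specialization of Theorem \ref{torus group} to the coprime case.
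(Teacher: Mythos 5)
Your proposal is correct and follows essentially the same route as the paper, which treats the corollary as an immediate consequence of Theorem \ref{torus group} combined with the remark that $G_{p,q}$ is the group of the $(p,q)$-torus knot when $\gcd(p,q)=1$. Your only addition is to spell out the standard fact that a torus knot is nontrivial exactly when $|p|>1$ and $|q|>1$ (justified via genus), which the paper leaves implicit.
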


\section{The first few prime knots}

Let us consider the nontrivial prime knots up to six crossings, in their usual tabulated order:

\bigskip

\noindent \includegraphics[scale=0.2]{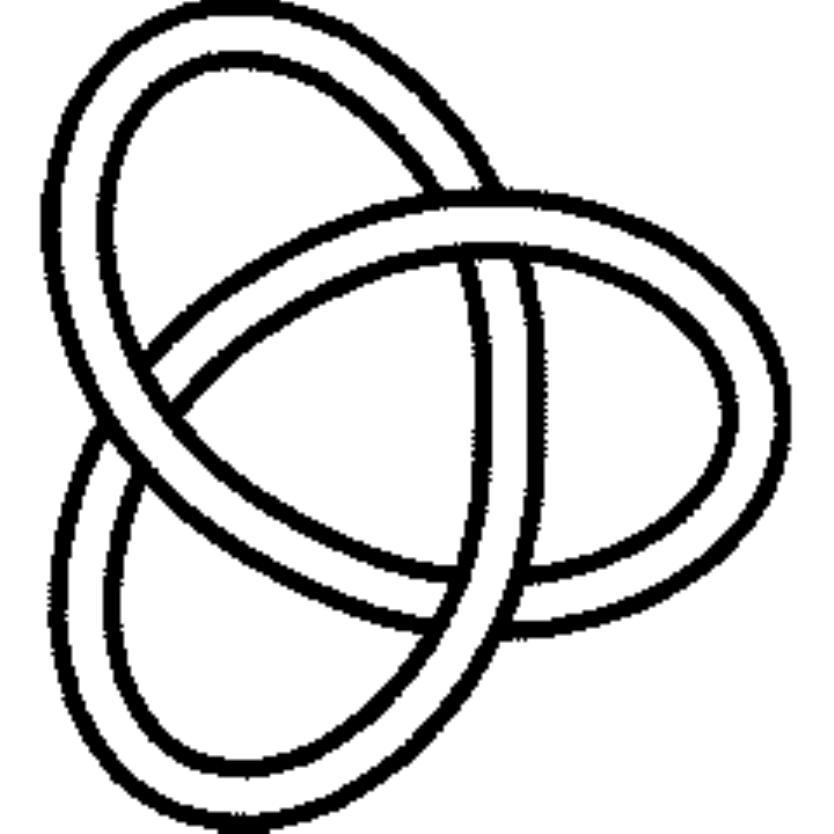}$3_1$:
 The trefoil, or $2, 3-$torus knot.  Its group has generalized torsion as observed in Example \ref{ex:trefoil}.

\bigskip

\noindent\includegraphics[scale=0.2]{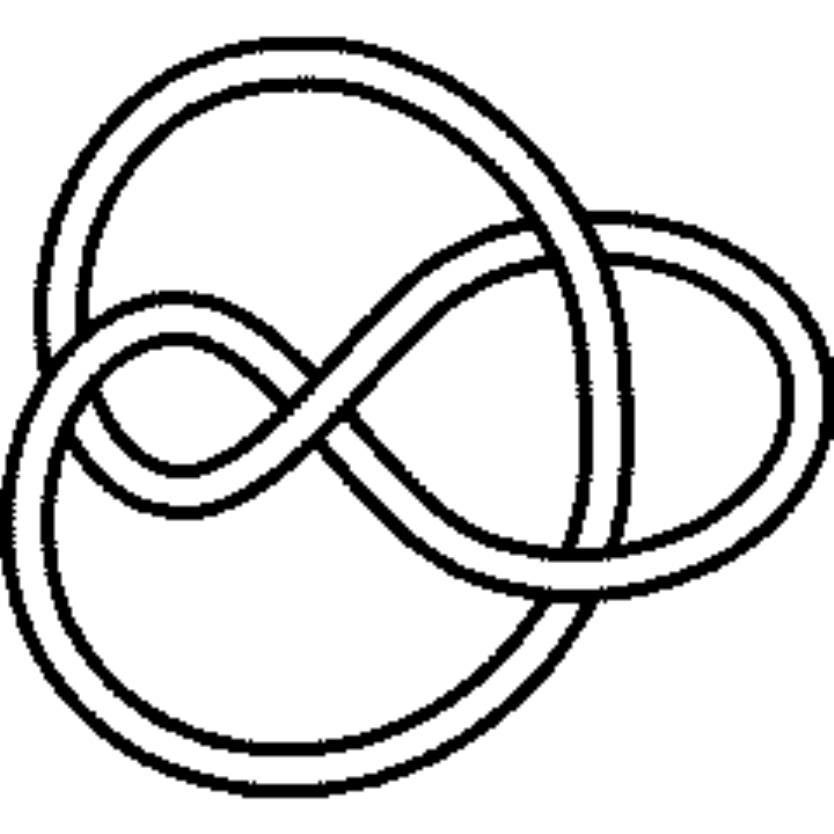}$4_1$:
The figure-eight knot.  It has group $\langle a, b | ab^3a = ba^2b \rangle$.  Its Alexander polynomial,
$1 - 3t + t^2$, has real positive roots.  It is shown in \cite{PR} that its group is bi-orderable using that fact, and that $4_1$ is a fibred knot (see below for definition).   Hence NO generalized torsion.

\bigskip
   
\noindent \includegraphics[scale=0.2]{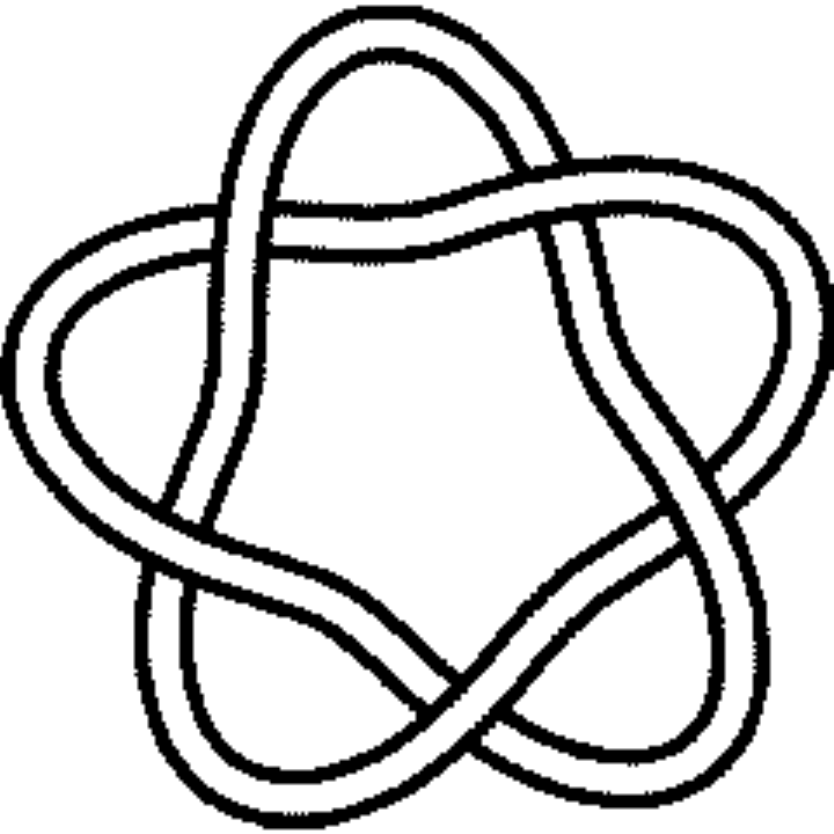}$5_1$:
The $2, 5-$torus knot or cinquefoil.  Generalized torsion here by Theorem \ref{torus group}.

\bigskip

All the above knots are fibred knots; that is, their complements in $S^3 = \R^3 + \infty$ fibre over $S^1$ with fibres being open surfaces whose closure has the knot as its boundary.

\bigskip

\noindent \includegraphics[scale=0.2]{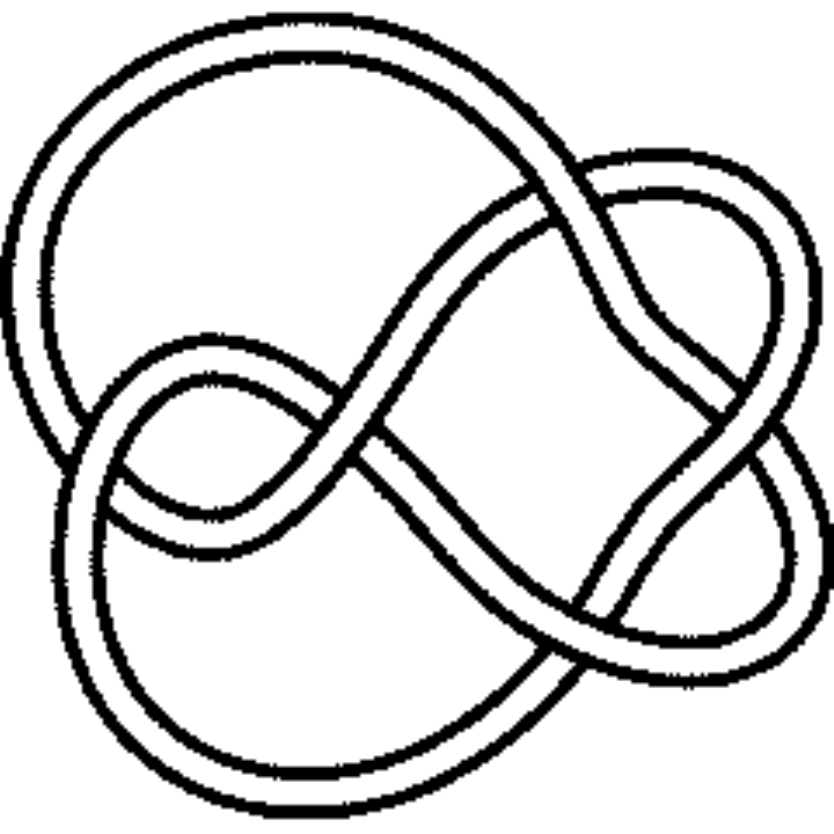}$5_2$:  We'll see this knot's group also has generalized torsion.

\bigskip

The knot $5_2$ is the first knot in the standard table which is not fibred.   It has Alexander polynomial $2t^2 -3t + 2$, whereas fibred knots have {\em monic} polynomials.  Its knot group $G_{5_2}$ has presentation
$$G_{5_2} = \langle a, b | b^2a^{-2}b^2 = a^{-1}b^3a^{-1} \rangle.$$   It is shown in \cite{CGW} that 
$G_{5_2}$ is not 
bi-orderable using the fact that the Alexander polynomial has no real roots.  In this note we strengthen this result by noting there is generalized torsion in $G_{5_2}$. 

\begin{theorem}\label{5_2}
In the group $G_{5_2}$, $a^{-1}bab^{-1}$ is a generalized torsion element. 
\end{theorem}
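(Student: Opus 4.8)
The plan is to dispatch the easy structural facts first and then confront the genuinely computational core. Write $g = a^{-1}bab^{-1}$. I would first observe that $g = [a,b^{-1}] = b^{a}b^{-1}$, where $b^{a} = a^{-1}ba$, so that $g = 1$ would force $a^{-1}ba = b$, i.e. $a$ and $b$ to commute. Since $5_2$ is a nontrivial knot its group is nonabelian, so $g \neq 1$. (This also shows $g$ lies in the commutator subgroup, consistent with both generators being meridians with the same image in $H_1(G_{5_2}) = \Z$.) Thus the whole difficulty is to exhibit a nonempty finite product of conjugates of $g$ equal to the identity.

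Before searching for that product, I would explain why the torus-knot mechanism of Proposition~\ref{proposition: commuting} cannot simply be reused. There we exploited the central element $x^p = y^q$ to get noncommuting $x,y$ with commuting powers. For $5_2$, which is hyperbolic, the center is trivial and the abelian subgroups are either cyclic or the peripheral $\Z^2$; in either case any two elements with commuting nontrivial powers already commute. Hence there is \emph{no} noncommuting pair with commuting powers, and Proposition~\ref{proposition: commuting} is genuinely unavailable. A direct construction is forced, which is why $5_2$ is the first interesting case.

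To locate a candidate product I would pass to the Alexander module $A$, on which conjugation by a meridian acts as multiplication by $t$. Since $5_2$ is $2$-bridge, $A$ is cyclic, $A \cong \Z[t^{\pm 1}]/(2t^2 - 3t + 2)$. A product $\prod_i g^{a^{n_i}}$ maps in $A$ to $p(t)\,\bar g$, where $p(t) = \sum_i t^{n_i}$ has nonnegative integer coefficients (its coefficients record the multiplicities of the conjugators), so triviality in $A$ requires $(2t^2-3t+2)\mid p(t)$. The roots $(3 \pm i\sqrt7)/4$ lie on the unit circle and are not positive real; this is precisely the no-real-root phenomenon used in \cite{CGW} to obstruct bi-ordering. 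Because the Alexander polynomial has no nonnegative real root, it admits a multiple $p(t) = (2t^2-3t+2)\,h(t)$ with nonnegative integer coefficients (take a real such multiple and clear denominators), and this dictates both the number of conjugates and the meridian powers $n_i$ to use.

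The hard part, and the main obstacle, is the passage from $A$ back to $G_{5_2}$. Vanishing of $p(t)\,\bar g$ in $A$ is only the abelianized shadow of the identity we need; it guarantees triviality modulo the second commutator subgroup, not in $G_{5_2}$ itself. The remaining work is to verify that the candidate word actually reduces to $1$ using the relator $ab^2a^{-2}b^2a = b^3$ (equivalently $b^2a^{-2}b^2 = a^{-1}b^3a^{-1}$), quite possibly after replacing some meridian-power conjugators $a^{n_i}$ by more general group elements to absorb the error living in the metabelian quotient. This final verification is a finite but intricate free-reduction in the presentation — the sort of computation that is straightforward to confirm (by hand or by machine) once the explicit product has been guessed, but whose discovery is the creative step.
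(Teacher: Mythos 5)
Your proposal establishes only the easy half of the theorem. The observation that $g = a^{-1}bab^{-1} \neq 1$ because $G_{5_2}$ is nonabelian is correct and matches the paper. But everything after that is a search heuristic, not a proof: the Alexander module computation shows at best that a product of conjugates $\prod_i g^{a^{n_i}}$ with multiplicities given by a nonnegative multiple $p(t)$ of $2t^2-3t+2$ lies in the second commutator subgroup $G''$, and you say so yourself (``only the abelianized shadow of the identity we need''). You then defer ``the remaining work'' of verifying that some candidate word is actually trivial in $G_{5_2}$ --- possibly after modifying the conjugators --- and that deferred step is the entire content of the theorem. No explicit product of conjugates of $g$ equal to $1$ is ever exhibited or verified, so nothing has been proved beyond $g \neq 1$.

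The gap is not a formality that routine effort would close, and the paper itself demonstrates this. The paper's proof consists precisely of the explicit computation you omit: a chain of nine membership statements in the set $X$ of products of conjugates of $AbaB$, manipulated with the relation $aB^3a = B^2a^2B^2$ until $1 \in X$ (or, in the second proof, two explicit products of conjugates of $c=[a,b^{-1}]$ shown to equal $b^{-2}$ and $b^{2}$ respectively). Moreover, your strategy --- ``Alexander polynomial with no positive real roots, hence generalized torsion'' --- is exactly what the paper poses as an \emph{open question} in its final sections: for the knot $6_3$, whose Alexander polynomial $1-3t+5t^2-3t^3+t^4$ has no real roots at all, the authors state they do not know whether the group contains generalized torsion. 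If the passage from vanishing in the Alexander module back to triviality in the group could be carried out in the generality you suggest, that question would not be open. So the metabelian-quotient argument cannot, even in principle, be completed into a proof without the concrete word computation you have skipped.
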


This will be proved in the next section.
 
\bigskip

The knot in the tables which follows $5_2$ is sometimes known as the ``stevedore knot'' ...

\noindent \includegraphics[scale=0.2]{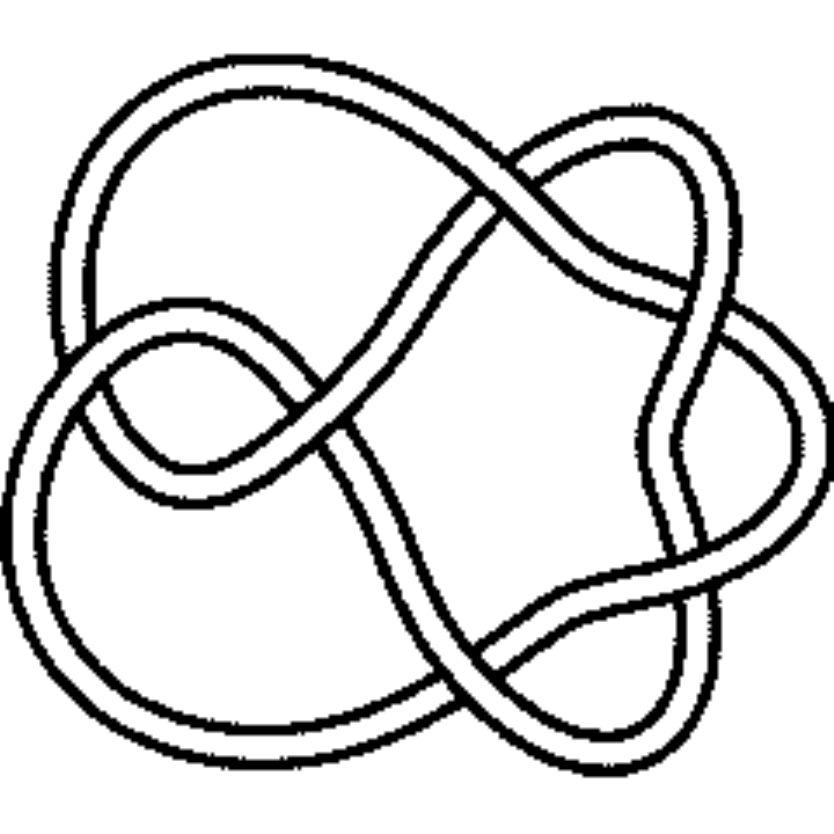}$6_1$:
This knot has Alexander polynomial $2 -5t + 2t^2$, whose roots are $1/2$ and 2.  According to \cite{CDN}, using results of  \cite{CGW}, the group of this knot is bi-orderable.  Therefore its group does NOT have generalized torsion.

The next knot in the tables is still a mystery ...

\noindent \includegraphics[scale=0.2]{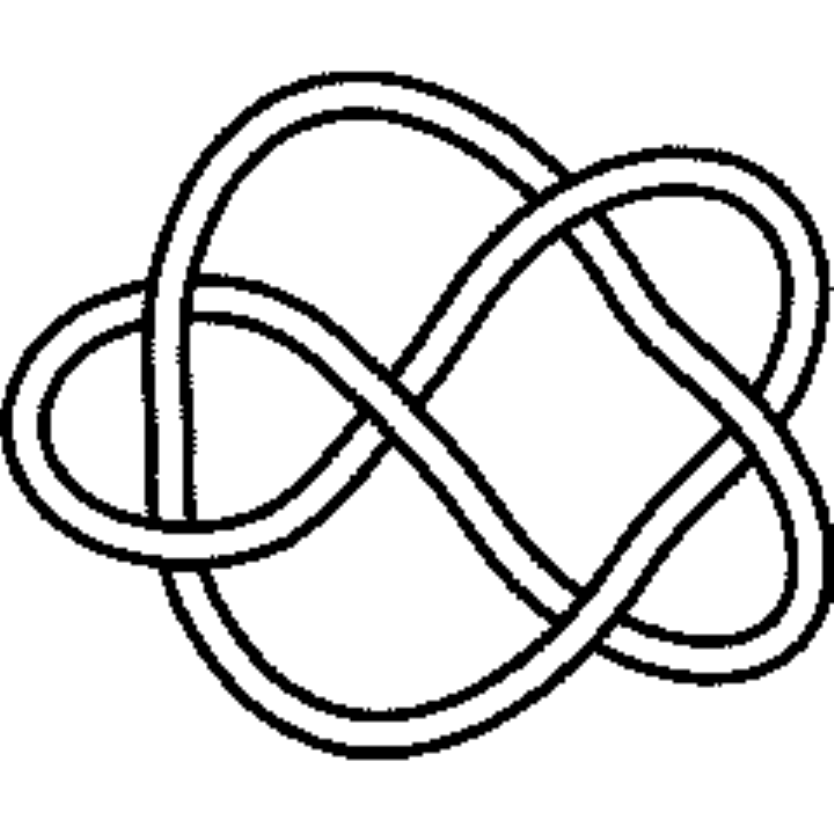}$6_2$:  This is the first knot in the tables for which bi-orderability, as well as the existence of generalized torsion, is unknown at this writing.  Its Alexander polynomial is
$1 -3t +3t^2 -3t^3 +t^4$, which has two positive real roots $2.15372\dots$ and $0.446431\dots$ and two complex roots, approximately $0.19098\pm 0.98159i$.  
 \bigskip
 
The last prime knot with at most six crossings is

\noindent \includegraphics[scale=0.2]{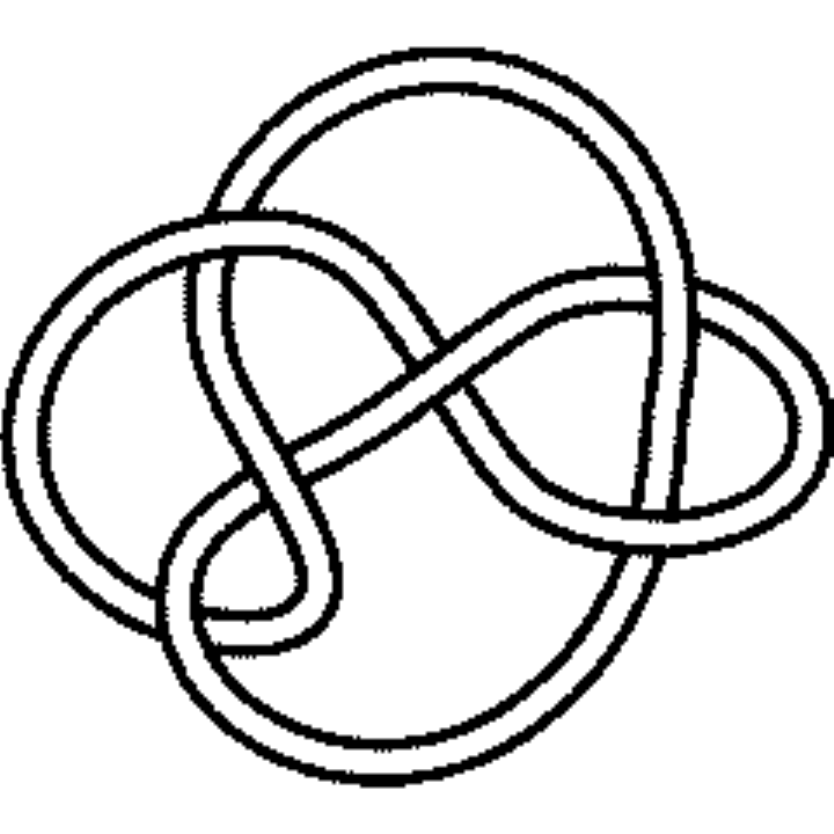}$6_3$: Its Alexander polynomial is $1 - 3t + 5t^2 - 3t^3 + t^4$ which has no real roots.  Using this fact, it is shown in \cite{CDN}, again using results of \cite{CGW}, that the group of this knot is not bi-orderable.  We do not know if its group contains generalized torsion.

\section{Proof of Theorem \ref{5_2}}

We adopt the convention of using capital letters to denote the inverse of a group element, so that the defining relation of the group $G_{5_2}$ becomes $b^2A^2b^2 = Ab^3A$.   The existence of generalized torsion in this group was discovered with the help of a Python program written by the first named author to test for bi-orderability.  The first proof we offer is adapted from the non-bi-orderability argument discovered by computer, and seems rather unmotivated.  We will also present a second proof, suggested by Andrew Glass,  which the reader may find somewhat more mathematically satisfying.

\begin{proof}
First we need to argue that $AbaB$ is not the identity.  But this follows since the group, like all groups of nontrivial knots, is nonabelian.

Let $X$ denote the set of elements of $G_{5_2}$ which are (nonempty) products of conjugates of 
$AbaB$.  Our goal is to show that $1 \in X$.  It is clear that $X$ is closed under multiplication and conjugation, and of course:
\begin{equation}
AbaB \in X
\end{equation}
Now $B^2(AbaB)b^2 = B^2Abab,$ so 
\begin{equation}
B^2Abab \in X
\end{equation}
Conjugating $AbaB$ by $B$ shows that
\begin{equation}
bAbaB^2 \in X
\end{equation}
Now consider the product of (1) and (3): $(AbaB)(bAbaB^2) = Ab^2aB^2$ which is equal to 
$aB^2Ab$ by the following calculation.  $(Ab^2aB^2)(aB^2Ab)^{-1} = (Ab^2aB^2)(Bab^2A)
= Ab^2(aB^3a)b^2A = Ab^2(B^2a^2B^2)b^2A = 1.$  Here we used the relation 
$aB^3a = B^2a^2B^2$, which follows from the defining relation.  So we have shown
\begin{equation}
aB^2Ab \in X
\end{equation}
Conjugating (4) gives $$(BaB^2)(aB^2Ab)(b^2Ab) = BaB^2aB^2(Ab^3A)b
= BaB^2aB^2(b^2A^2b^2)b = BaB^2Ab^3.$$  This last expression equals $BAb^2a$ from the calculation
$$(BaB^2Ab^3)(AB^2ab) =  BaB^2(Ab^3A)B^2ab =  BaB^2(b^2A^2b^2)B^2ab = 1,$$ so we conclude
\begin{equation}
BAb^2a \in X
\end{equation}
Another conjugate of (4) gives $b(aB^2Ab)B = baB^2A$ so
\begin{equation}
baB^2A \in X
\end{equation}
Next we conjugate (1): $ba(AbaB)AB = b^2aBAB$ so that 
\begin{equation}
b^2aBAB \in X
\end{equation}
Multiplying those last two elements of $X$ yields $(b^2aBAB)(baB^2A) = b^2aB^3A$:
\begin{equation}
b^2aB^3A \in X
\end{equation}
Similarly the product of (2) and (5) gives $(B^2Abab)(BAb^2a) = B^2Ab^3a$
\begin{equation}
B^2Ab^3a \in X
\end{equation}
Finally, conjugate (9) by $A^4$ and multiply by (8) to conclude
\begin{align*}
a^4(B^2Ab^3a)A^4(b^2aB^3A) &= a^4B^2(Ab^3A)A^2b^2aB^3A \\
&= a^4B^2(b^2A^2b^2)A^2b^2aB^3A \\
&= a^2(b^2A^2b^2)aB^3A \\
&= a^2(Ab^3A)aB^3A \\
&= 1 \in X
\end{align*}\end{proof}
 
\noindent{\it Second proof.} We are interested in showing that $c:=[a,b^{-1}]$ is a generalized torsion element in the $5_2$ knot group. 
Obviously, if we're to have a hope, we'd better look at products of conjugates of $c$ by elements of the form $a^m b^n$ where $|m|,|n|$ are small. 
Note that $c^{a^{-1}}=[b^{-1},a^{-1}]$ and $c^b=[b,a]$. 
Since $a^{-2}b^2a=b^{-2}a^{-1}b^3$ (one form of the defining relation), we obtain
$c^{a^{-1}}c c^{b^{-1}}=bab^{-3}a^{-1}b$. 
Hence $(c^{a^{-1}}c c^{b^{-1}})^{b^{-1}a^2}=b^{-2}$. 
Now $c^{b^2}c^b=[b^2,a]$, so $c^{b^2}(c^{b^2}c^b)^{a^{-1}b}=b^{-2}a^{-1}ba^2b^{-2}a^{-1}b^3=b^{-2}a^{-1}b^3a$ (using $a^{-1}b^3=b^2a^{-2}b^2a$). 
So we can conclude that
$(c^{b^2}(c^{b^2}c^b)^{a^{-1}b})^{a^{-2}}=b^2$. 
Thus one product of conjugates of $c$ is the inverse of another product of conjugates of $c$, so $c$ is a generalized torsion element. 
 \qed

\section{Satellites and sums}

Satellites of knots are constructed as follows.  Let $K$ be a knot in the interior of a solid torus $V$ which in turn is standardly embedded in $\R^3$; that is, $V$ is a regular neighbourhood of the trivial knot.  We assume that $K$ is essential in $V$, in the sense that the inclusion induces an {\em injective} homomorphism 
$\pi_1(\partial V) \to \pi_1(V \setminus K).$  Now let $K_1$ be some other knot in $\R^3$ with tubular neighbourhood $N(K_1)$.  Since $N(K_1)$ is a solid torus, there is a homeomorphism $h:V \to N(K_1)$ -- in fact there are infinitely many isotopy classes of such homeomorphisms.  Finally, we let $K_2 = h(K)$; it is itself a knot in $\R^3$.  In this situation, we say that $K_2$ is a {\em satellite} of $K_1$ with {\em pattern} knot $K$.   By a van Kampen argument, one can see that the group of $K_1$ is isomorphic with a subgroup of the group of $K_2$.  This implies the following.

\begin{proposition}
If  $K_2$ is a {\em satellite} of $K_1$ and the knot group of $K_1$ has generalized torsion, then the same is true of the group of $K_2$.
\end{proposition}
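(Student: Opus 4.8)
The plan is to make precise the claim that the knot group $\pi_1(K_1)$ embeds as a subgroup of $\pi_1(K_2)$ via a van Kampen decomposition, and then observe that generalized torsion is inherited by overgroups. First I would set up the topology: the satellite $K_2 = h(K)$ sits inside the tubular neighbourhood $N(K_1)$, and I would decompose $S^3 \setminus K_2$ along the torus $\partial N(K_1) = \partial V$ into two pieces, namely the exterior $S^3 \setminus \mathrm{int}\, N(K_1)$ (which is the knot exterior of $K_1$) and the interior piece $V \setminus K = N(K_1) \setminus K_2$. The essentiality hypothesis — that $\pi_1(\partial V) \to \pi_1(V \setminus K)$ is injective — is exactly what guarantees that the gluing torus injects on each side, so the van Kampen theorem expresses $\pi_1(S^3 \setminus K_2)$ as an amalgamated free product of $\pi_1(S^3 \setminus K_1)$ and $\pi_1(V \setminus K)$ over $\pi_1(\partial V) \cong \Z \oplus \Z$.

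The key algebraic step is that in an amalgamated free product $A *_C B$, each factor $A$ and $B$ injects into the amalgam; this is the standard normal-form theorem for amalgamated products. Consequently $\pi_1(K_1)$ sits inside $\pi_1(K_2)$ as a genuine subgroup, with the identity element of the subgroup being the identity of the whole group. I would then invoke the purely group-theoretic observation that generalized torsion passes to overgroups: if $H \le G$ and $x \in H \setminus \{1\}$ satisfies $x^{y_1} \cdots x^{y_k} = 1$ for some $y_i \in H$, then the very same relation holds in $G$ (the conjugators $y_i$ and the element $x$ all lie in $H \subseteq G$, and the identity of $H$ equals the identity of $G$), so $x$ remains a nontrivial generalized torsion element of $G$. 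Applying this with $H = \pi_1(K_1)$ and $G = \pi_1(K_2)$ finishes the argument.

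The main obstacle is not the algebra — the inheritance of generalized torsion by overgroups is immediate — but rather justifying the van Kampen decomposition carefully, in particular verifying that the two hypotheses (standard embedding of $V$, essentiality of $K$ in $V$) combine to make the amalgamating subgroup inject into \emph{both} factors. Injectivity into the $K_1$-exterior side is automatic because $\partial N(K_1)$ is incompressible in the exterior of the nontrivial knot $K_1$ (or, if one prefers, one simply needs the meridian–longitude subgroup to inject, which holds for any knot exterior); injectivity into the interior side $V \setminus K$ is precisely the essentiality assumption. Once both injectivity conditions are in hand, the amalgamated-product structure and hence the subgroup embedding follow, and the rest is the elementary inheritance statement. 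I would remark that the homeomorphism $h$ need not be chosen carefully for this argument: any of the infinitely many isotopy classes yields a satellite whose group contains $\pi_1(K_1)$, so the conclusion holds for every such $K_2$.
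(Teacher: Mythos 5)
Your proof is correct and follows essentially the same route as the paper, which disposes of this proposition with the one-line remark that a van Kampen argument embeds the group of $K_1$ as a subgroup of the group of $K_2$; you have simply supplied the details of that decomposition (splitting along $\partial N(K_1)$, injectivity of the peripheral torus on both sides, the normal-form theorem for amalgamated products) together with the elementary observation that generalized torsion passes to overgroups. One small caveat: your parenthetical claim that the meridian--longitude subgroup injects into the group of \emph{any} knot exterior is false for the unknot (whose exterior is a solid torus), but this is harmless here, since the hypothesis that the group of $K_1$ contains generalized torsion forces $K_1$ to be nontrivial --- the unknot group $\Z$ is bi-orderable --- so your incompressibility argument applies.
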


\begin{corollary}
If one of the knots in a connected sum of knots has generalized torsion in its group, then the same is true of the sum.
\end{corollary}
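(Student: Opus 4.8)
The plan is to exhibit a connected sum as a special instance of the satellite construction, so that the preceding Proposition applies directly. Write the sum as $K_1 \# K_2$ and, relabelling if necessary, assume it is $K_1$ whose group carries a generalized torsion element. Since connected sum is commutative and associative, for a sum of several knots it suffices to treat two summands and then induct on the number of summands.

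First I would invoke the standard fact that $K_1 \# K_2$ is a satellite of $K_1$. Concretely, place $K_2$ inside a solid torus $V$ as a pattern of winding number one -- so that a meridian disk of $V$ meets $K_2$ algebraically, hence geometrically, exactly once -- by tying the core of $V$ into the knot $K_2$ inside a small ball. Taking $K_1$ as the companion and letting $h : V \to N(K_1)$ be a framing-preserving homeomorphism (sending the longitude of $V$ to the $0$-framed longitude of $K_1$), the resulting knot $h(K_2)$ is exactly $K_1 \# K_2$. In the language of the satellite construction above, this realizes $K_1 \# K_2$ as the satellite of $K_1$ with pattern $K_2$.

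The one geometric point that must be checked is the essentiality hypothesis of the Proposition, namely that the inclusion $\pi_1(\partial V) \to \pi_1(V \setminus K_2)$ is injective. Because $K_2$ has winding number one, the meridian of $V$ is homologous to a meridian of $K_2$ and so is nontrivial in $H_1(V \setminus K_2) \cong \Z$; in particular it cannot bound a disk in $V \setminus K_2$, and when $K_2$ is nontrivial the torus $\partial V$ is incompressible in $V \setminus K_2$, so the inclusion is injective. If instead $K_2$ is the unknot, then $K_1 \# K_2 = K_1$ and there is nothing to prove. I expect this verification -- confirming that the winding-number-one pattern is essential -- to be the main (and essentially the only) obstacle; everything else is formal.

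With essentiality in hand, the Proposition immediately yields that the group of the satellite $K_1 \# K_2$ contains a generalized torsion element, since the group of the companion $K_1$ does, and the induction then covers sums of arbitrarily many knots. Equivalently, and perhaps more transparently, one may note that van Kampen presents $\pi_1(\R^3 \setminus (K_1 \# K_2))$ as the free product of the two knot groups amalgamated over a common meridian, in which each factor injects; a generalized torsion relation $x^{y_1} x^{y_2} \cdots x^{y_k} = 1$ with $x \ne 1$ holding in the factor $\pi_1(\R^3 \setminus K_1)$ then persists verbatim in the amalgam, so $x$ remains a generalized torsion element of the group of the sum.
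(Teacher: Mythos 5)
Your proof is correct and follows essentially the same route as the paper, which simply observes that a connected sum can be viewed as a satellite of either summand and invokes the preceding Proposition. Your verification of the essentiality hypothesis (via the winding-number-one pattern and incompressibility of $\partial V$) and the alternative amalgamated-free-product remark are sound elaborations of details the paper leaves implicit.
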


This follows since a connected sum of two knots can be viewed as a satellite of either of the summands.  

\begin{corollary}  
Algebraic knots in the sense of Milnor \cite{Mil} have generalized torsion in their groups.
\end{corollary}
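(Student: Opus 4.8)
The plan is to reduce to the two facts already in hand: nontrivial torus knots carry generalized torsion (the corollary to Theorem \ref{torus group}), and the satellite proposition, which propagates generalized torsion from a companion to any of its satellites. The whole argument rests on the classical structure theorem, going back to Brauner, Burau and Zariski and put in its modern form by Milnor \cite{Mil}, that the link of an isolated singularity of an irreducible complex plane curve is an \emph{iterated torus knot}.

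First I would recall this structure precisely. An algebraic knot $K$ is the intersection of the zero locus of a polynomial $f(z,w)$ having an isolated singularity at the origin with a small sphere $S^3_\epsilon$. For an irreducible branch, the Puiseux expansion produces a finite list of characteristic pairs $(p_1,q_1), \ldots, (p_n, q_n)$, and $K$ is obtained by starting from the torus knot $T_{p_1,q_1}$ and successively forming cables: $K_{i+1}$ is a $(p_{i+1}, q_{i+1})$-cable of $K_i$, with $K = K_n$. Because the point is genuinely singular, the first characteristic pair satisfies $|p_1|, |q_1| > 1$, so the base $K_1 = T_{p_1, q_1}$ is a nontrivial torus knot.

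Next I would observe that each cabling step is a satellite operation in the sense of the previous section. The $(p_{i+1}, q_{i+1})$-cable of $K_i$ is by definition the image of a torus-knot pattern drawn on the boundary of a tubular neighborhood of $K_i$; since the cabling is nontrivial, this pattern has nonzero winding number and is essential in the solid torus, so the hypotheses of the satellite proposition are met with companion $K_i$.

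The proof then assembles by induction up the tower. The base $K_1 = T_{p_1, q_1}$ has generalized torsion in its group by the corollary to Theorem \ref{torus group}. Assuming $K_i$ has generalized torsion, the satellite proposition gives the same for $K_{i+1}$, and hence $K = K_n$ has generalized torsion. I expect the only real subtlety --- the step most in need of careful citation rather than computation --- to be the structure theorem itself: confirming that an algebraic knot is indeed an iterated torus knot whose base is a nontrivial torus knot, and that each cabling genuinely satisfies the essentiality hypothesis of the satellite proposition. Once that structural input is in place the conclusion is immediate, since all the group-theoretic work was already carried out in the torus-knot and satellite results.
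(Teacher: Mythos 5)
Your proposal is correct and takes essentially the same approach as the paper, whose one-sentence proof likewise observes that algebraic knots are iterated cables (hence satellites of a nontrivial torus knot) and then invokes the satellite proposition together with the torus-knot corollary. The only difference is bookkeeping: the paper applies the satellite proposition once, viewing the algebraic knot directly as a satellite of its base torus knot, whereas you apply it inductively at each cabling stage.
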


This is because they are iterated cables of the unknot, and therefore a satellite of a torus knot.

\begin{corollary}
There exist knots with trivial Alexander polynomial whose group contains generalized torsion.
\end{corollary}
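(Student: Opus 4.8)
The plan is to exploit the satellite Proposition above to manufacture a knot whose Alexander polynomial is trivial while its group still inherits generalized torsion from a torus-knot companion. The crucial device is to take the pattern to have \emph{winding number zero} in the solid torus, since then the companion contributes nothing but a unit to the Alexander polynomial, yet its group still injects and carries the generalized torsion.

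Concretely, I would take the companion $K_1$ to be any nontrivial torus knot, say the trefoil; by Theorem \ref{torus group} its group has generalized torsion. For the pattern I would take the (untwisted) Whitehead pattern $K$ in the solid torus $V$, so that the resulting satellite $K_2 = h(K)$ is the untwisted Whitehead double of $K_1$. The Whitehead pattern has winding number $n = 0$ in $V$, and the Seifert--Torres satellite formula $\Delta_{K_2}(t) = \Delta_K(t)\,\Delta_{K_1}(t^n)$ shows the companion enters only through $\Delta_{K_1}(1) = \pm 1$. Since the Whitehead pattern is unknotted when $V$ is embedded standardly, $\Delta_K(t) = \pm 1$ as well, whence $\Delta_{K_2}(t) = \pm 1$; equivalently, it is classical that every untwisted Whitehead double has trivial Alexander polynomial.

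It then remains to verify that the satellite Proposition genuinely applies, i.e. that the Whitehead pattern is essential in the sense required, so that $\pi_1(\partial V) \to \pi_1(V \setminus K)$ is injective. This is exactly the assertion that the companion torus $\partial V$ is incompressible in the complement of the Whitehead double, which holds because a meridian disk of $V$ necessarily clasps $K$ and so cannot be made disjoint from it. Granting this, the preceding Proposition delivers generalized torsion in the group of $K_2$, and together with the triviality of its Alexander polynomial this establishes the Corollary.

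The main obstacle here is conceptual rather than computational: one must recognize that generalized torsion and the Alexander polynomial can be \emph{decoupled}, the former being carried by the companion's embedded subgroup while the latter is annihilated by the winding-number-zero condition. Once the Whitehead double is selected, the only real verification is the essentiality of the pattern, which is standard, and everything else follows formally from the satellite Proposition and the Seifert--Torres formula.
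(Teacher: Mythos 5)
Your proposal is correct and follows essentially the same route as the paper: the paper's proof also takes the untwisted Whitehead double of a knot with generalized torsion (its pictured example being the doubled trefoil), citing the classical fact that untwisted doubles have trivial Alexander polynomial and invoking the satellite proposition. Your added details---the Seifert--Torres formula with winding number zero and the essentiality of the Whitehead pattern---merely flesh out steps the paper leaves implicit.
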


For example, consider Whitehead doubles of knots with generalized torsion (see \cite{KL} p. 167, or \cite{Wh}).  If they are ``untwisted'' they have trivial polynomial, but being satellites they inherit generalized torsion in their groups.

\noindent \includegraphics[scale=0.5]{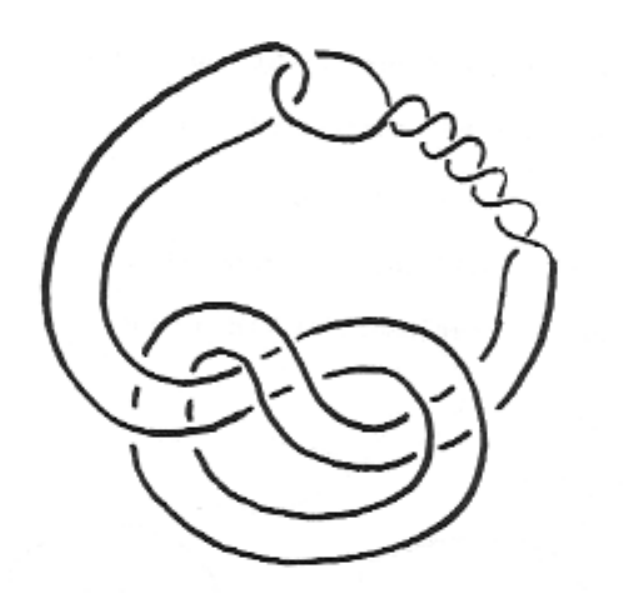}
Untwisted double of the trefoil.  It has trivial Alexander polynomial, but there is generalized torsion in its group.

The following shows that the Alexander polynomial cannot detect the nonexistence of generalized torsion. 

\begin{corollary}
For any knot $K$, there exists another knot with the same Alexander polynomial as $K$, but whose group contains generalized torsion.
\end{corollary}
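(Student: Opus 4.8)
The plan is to combine the connected-sum construction with the knot produced in the preceding corollary. Recall that there we exhibited a knot -- the untwisted Whitehead double of the trefoil -- whose group contains generalized torsion but whose Alexander polynomial is trivial. Call this knot $D$, so that $\Delta_D(t) = 1$ and the group of $D$ has a generalized torsion element. Note that $D$ is nontrivial, since its group is nonabelian (indeed it has generalized torsion, whereas the trivial knot group $\Z$ does not).

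Given an arbitrary knot $K$, I would form the connected sum $K' = K \mathbin{\#} D$. The main tool is the classical fact that the Alexander polynomial is multiplicative under connected sum: $\Delta_{K_1 \# K_2}(t) = \Delta_{K_1}(t)\,\Delta_{K_2}(t)$, up to the usual units $\pm t^n$. Applying this with $K_1 = K$ and $K_2 = D$ gives $\Delta_{K'}(t) = \Delta_K(t)\,\Delta_D(t) = \Delta_K(t)$, so $K'$ has the same Alexander polynomial as $K$.

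It then remains to observe two things. First, by the corollary on connected sums above, since the group of the summand $D$ has generalized torsion, so does the group of $K'$. Second, $K'$ is genuinely a different knot from $K$: because $D$ is nontrivial, the uniqueness of prime decomposition of knots (Schubert's theorem) forbids $K \mathbin{\#} D = K$. Together these give a knot distinct from $K$, sharing its Alexander polynomial, yet possessing generalized torsion.

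The step requiring the most care is not really an obstacle but a bookkeeping point: one must confirm that connected summing with $D$ yields a knot distinct from $K$, and that the polynomial equality is genuine rather than holding only up to a shift by units that would alter the stated normalization. Both facts are entirely standard, so the corollary follows essentially immediately from the two preceding ones, with no substantive new difficulty.
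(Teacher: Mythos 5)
Your proof is correct and takes essentially the same approach as the paper, which simply forms the connected sum of $K$ with the untwisted doubled trefoil and invokes the two preceding corollaries. Your additional verifications (multiplicativity of $\Delta$ up to units, and distinctness of $K \mathbin{\#} D$ from $K$ via prime decomposition) are details the paper leaves implicit, not a different argument.
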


\begin{proof}  Just take the connected sum of $K$ with, say, the doubled trefoil above.
\end{proof}

It is known that for nontrivial knots which are fibred \cite{CR}, or which have one-relator presentations of a particular form \cite{CGW}, if the Alexander polynomial has no positive real roots then the knot group is not bi-orderable. This begs the following question.

\begin{question}
If $K$ is a knot whose Alexander polynomial is nontrivial and has no real positive roots, does it follow that the group of $K$ contains generalized torsion?
\end{question}

\section{conclusion}

In summary, we have seen that all nontrivial torus knots, as well as the hyperbolic knot $5_2$, have generalized torsion in their groups.  Moreover, possessing generalized torsion in the group is preserved under taking satellites.

In all the cases we have discussed, the generalized torsion element identified in the knot group is a commutator in appropriate generators.  Since for any knot group $G$ the abelianization is infinite cyclic, any generalized torsion element must be in the kernel of the abelianization map $G \to \Z$, in other words, the commutator subgroup.  That it is a {\em single} commutator in all these cases is interesting.  

It is also interesting to compare the two knot groups, for $4_1$ and $5_2$.  The latter can be rewritten, exchanging $a$ with its inverse, so that we have 
$$ G_{4_1} \cong \langle a, b | ab^3a = ba^2b \rangle   \quad 
G_{5_2} \cong \langle a, b | ab^3a = b^2a^2b^2 \rangle.$$
Curiously, the latter has generalized torsion while $G_{4_1}$ does not.

From \cite{PR} we have infinitely many knots whose groups are bi-orderable, namely fibred knots whose Alexander polynomial has all roots real and positive.   These knot groups cannot contain generalized torsion by Proposition \ref{bi-ordNoGenTors}. Although this is an infinite class, it does not seem to be proportionately large.  There are certainly many (most) knots for which bi-orderability of its group is an open question.  The same is true of generalized torsion.

\end{document}